\newtheorem{theorem}{Theorem}
\newtheorem{corollary}[theorem]{Corollary}
\newtheorem{lemma}[theorem]{Lemma}
\theoremstyle{remark}
\newtheorem{exercise}{Exercise}[section]
\def\ZZ{{\mathbb Z}}
\def\bc{\begin{center}}
\def\ec{\end{center}}
\def\be{\begin{enumerate}}
\def\ee{\end{enumerate}}
\def\bi{\begin{itemize}}
\def\ei{\end{itemize}}
\def\bs{\begin{slide}}
\def\es{\end{slide}}
\def\bx{\begin{exercise}}
\def\ex{\end{exercise}}
\def\l[{\left[}
\def\r]{\right]}
\def\l({\left(}
\def\r){\right)}
\newcommand{\mc}[1]{\ensuremath{\mathcal{#1}}}
\newcommand{\Po}{\mathcal{P}}
\newcommand{\Q}{\mathcal{Q}}
\newcommand{\mon}[1]{\text{mon}(#1)}
\title{Quotient Representations of Uniform Tilings}
\author{Daniel Pellicer}
\author{Gordon Williams}
\begin{document}
\begin{abstract}
   Given a flag in each of the vertex-transitive tessellations of the
Euclidean plane by regular polygons, we determine the flag stabilizer under the action of the automorphism group
of a regular cover. In so doing we give a presentation of these tilings as quotients
of regular (infinite) polyhedra.
\end{abstract}
\maketitle

\section{Introduction}

The vertex-transitive tessellations of the Euclidean plane have been the object of study for centuries (see, for example, \cite[Section 2.10]{GruShe87}).
There are eleven edge-to-edge vertex-transitive tessellations
of the Euclidean plane by regular convex polygons, up to enantiomorphic forms. Each of them is totally determined by
the cyclic arrangement of polygons around a vertex.
Throughout we follow the notation in Gr\"unbaum and Shepherd \cite{GruShe87}, where $p_{1}.p_{2}.\ldots.p_{n}$ denotes the vertex-transitive tessellation whose
vertices are surrounded by $n$ faces $f_1, \dots, f_n$ (listed in cyclic order) with $f_i$ containing $p_i$ edges. Furthermore, if $p_i = p_{i+1} = \dots = p_{j}$ we may replace
$p_i.p_{i+1}. \cdots .p_{j}$ by $p_i^{j+1-i}$. The tessellations $3^6$, $4^4$ and $6^3$
are regular, both as classical objects, and in the  sense of abstract polyhedra defined below. The remaining eight
tessellations are $3.6.3.6$, $4.8.8$, $3.12.12$, $3.4.6.4$, $3.3.3.4.4$, $3.3.4.3.4$,
$4.6.12$ and $3.3.3.3.6$. As indicated by the notation, these have at least two
different types of tiles.
Throughout this paper we shall refer to these eight (non-regular) tessellations of the plane as the {\em uniform tilings}.

   Abstract polytopes are combinatorial structures satisfying some of the combinatorial
properties of convex polytopes. Of particular interest
are abstract regular polytopes; that is, abstract polytopes that allow all possible
automorphisms given by abstract reflections (see \cite{McMSch02} for details). Michael Hartley \cite{Har99}  proved that every abstract polytope is a quotient of an
abstract regular polytope. This idea was illustrated in \cite{HarWil07} where
presentations of the sporadic Archimedean polyhedra are constructed by finding the minimal regular covers.

   In this paper, we address the problem of presenting the uniform tilings
as quotients of regular polyhedra. For each tiling, we determine an enumerable generating set
for the stabilizer of a flag under the flag action from a string C-group. Furthermore, we prove that
such stabilizer contains no finite generating set. The problem of determining
the minimal regular covers is beyond the scope of the current work and will be
discussed in subsequent articles.


   We begin with some preliminary material, referring to \cite{McMSch02}
and \cite{HarWil07} for details.

\section{Abstract Polyhedra and Related Objects}
Following \cite[Section 2A]{McMSch02}, we define   an {\em abstract $d$-polytope} $\Po$ to be a partially ordered set whose elements are
called {\em faces}, with partial order denoted by $\le$, and that satisfies the following properties. It contains a minimum
face $F_{-1}$ and maximum face $F_d$, and all maximal totally
ordered subsets of $\Po$, the {\em flags} of $\Po$, contain precisely $d+2$ elements including $F_{-1}$ and
$F_d$. Consequently, $\le$ induces a strictly increasing rank function such that the ranks
of $F_{-1}$ and $F_d$ are $-1$ and $d$ respectively. Finally, $\Po$ is strongly
connected and satisfies the ``diamond condition'' (see \cite[Section 2A]{McMSch02} for details).

   In the present paper we are interested only in {\em abstract polyhedra}, that is, abstract
polytopes of rank $3$; however Theorem \ref{T:SpanningTrees} and Corollary \ref{T:finitePolytopes} have relevance to abstract polytopes of general rank. Throughout the remainder of this paper we will use ``polyhedra'' to mean either the geometric objects or abstract polyhedra, as appropriate. The {\em vertices} and {\em edges} of an abstract polyhedron are its
faces of rank $0$ and $1$ respectively. In this context there is little possibility
of confusion if we refer to the rank $2$ faces simply by {\em faces}. We define a {\em section} $F/G$ of a polytope to be the collection of all faces $H$ such that $G\le H\le F$. The {\em vertex-figure}
at a vertex $v$ is the section $\{F \in \mc P\, |\, v \le F\}$. In the case of
polyhedra, the diamond condition requires that every edge contains precisely two vertices
and is contained in precisely two faces, and for any vertex $v$ contained
in a face $f$ there are precisely two edges containing $v$ which are contained in $f$.
As a consequence of the diamond condition, given $i \in \{0, 1, 2\}$ and a flag $\Psi$,
there exists a unique flag $\Psi^i$ that coincides with $\Psi$ in all faces except
in the face of rank $i$. The flag $\Psi^i$ is called the {\em i-adjacent flag} of
$\Psi$. The strong connectivity for polyhedra implies that every face and every
vertex-figure is isomorphic to a polygon, that is, a cycle in the
graph theoretic sense. The {\em degree} of a vertex $v$ is the number of edges
containing $v$, and the {\em co-degree} of a face $f$ is the number of edges contained
in $f$. 


   Whenever every vertex of a polyhedron $\Po$ has the same degree $p$, and every face of
$\Po$ has the same co-degree $q$ we say that $\Po$ is {\em equivelar} and has {\em Schl\"afli
type} $\{p, q\}$.

   An {\em automorphism} of a polyhedron $\Po$ is an order preserving bijection of its elements. We say that a polyhedron is {\em regular} if its automorphism group $\Gamma(\Po)$ is transitive on the set of flags of \mc P, which we will denote by $\mc F(\mc P)$. The Platonic solids and the tessellations $3^6$, $4^4$ and $6^3$ are examples of abstract regular polyhedra.

   A {\em string C-group} $G$ of rank $3$ is a group with distinguished
involutory generators $\rho_0, \rho_1, \rho_2$, where 
$(\rho_0 \rho_2)^2 = \varepsilon$, the identity in $G$, and $\langle \rho_0, \rho_1 \rangle \cap \langle \rho_1, \rho_2 \rangle
= \langle \rho_1 \rangle$  (this is called the {\em intersection condition}). The automorphism group of an abstract regular polyhedron is always a string
C-group of rank $3$. Having fixed an arbitrarily chosen {\em base flag} $\Phi$, we obtain $\rho_{i}$ as the (unique) automorphism mapping $\Phi$ to the $i$-adjacent flag $\Phi^{i}$.
Furthermore, any string C-group of rank $3$ is the automorphism
group of an abstract regular
polyhedron \cite{McMSch02}, so, up to isomorphism, there is a one-to-one correspondence between the string C-goups of rank $3$
and the abstract regular polyhedra.  Thus, in the study of abstract polyhedra we may either  work with the polyhedron as a poset, or with its automorphism group. Automorphism groups of regular polyhedra will be denoted by $\Gamma$ in this paper.

   For any polyhedron $\Po$ we define permutations $r_0, r_1, r_2$ on $\mc F(\mc P)$ by
\[\Psi{r_i} := \Psi^i,\]
for every flag $\Psi$ of $\Po$ and $i=0, 1, 2$ (note that these are {\em not} automorphisms of \mc P). The group $\mon{\mc P}:=\langle r_0, r_1, r_2 \rangle$
will be referred to as the {\em monodromy group} of $\Po$ (see \cite{HubOrbWei},
 but note that this definition differs from the definition in \cite{Zvo98}, where the author only considers words with even length in the generators $r_i$). The {\em flag action} of a string C-group $\Gamma=\langle \rho_0, \rho_1, \rho_2 \rangle$ on $\Po$ is the group homomorphism
$\Gamma\to \mon{\mc P}$ defined by $\rho_i\mapsto r_i$, provided such a homomorphism exists. In this context, if $w = w' \rho_i$
for some $w' \in \Gamma$ then $\Psi^w  = (\Psi^{w'}){r_i}=(\Psi^{w'})^{i}$. Note that, by definition of automorphism, the action of each $r_i$ (and thus the flag action) commutes with the automorphisms of any
given polyhedron. That is,
\begin{equation}\label{eq:actionsymmetry}
(\Psi{r_i}) \alpha = (\Psi \alpha){r_i}
\end{equation}
for $i=0, 1, 2$ and $\alpha \in \Gamma(\Po)$.

We say that the regular polytope $\mc P$ is a {\em cover} of $\mc Q$ if 
$\mc Q$ admits a flag action from $\Gamma(\mc P)$, such a cover is denoted by $\mc P\searrow \mc Q$. (This implies the notion of covering described in \cite[p. 43]{McMSch02}.) For example, the (universal) polyhedron with automorphism group isomorphic to the
{\em Coxeter group}
$[\infty, \infty] := \langle \rho_0, \rho_1, \rho_2 \,|\, (\rho_0 \rho_2)^2 = \varepsilon \rangle$ covers all other polyhedra. Whenever the least common multiple of the co-degrees
of the faces of a polyhedron $\mc P$ is $p$, and the least common multiple of the vertex degree  of $\mc P$ is $q$, $\mc P$ is covered by the
tessellation $\{p, q\}$ whose automorphism group is isomorphic to the Coxeter group
\[[p, q] := \langle \rho_0, \rho_1, \rho_2 \,|\, (\rho_0 \rho_2)^2 =
(\rho_0 \rho_1)^p = (\rho_1 \rho_2)^q = \varepsilon \rangle.\]
(Recall that $\{p,q\}$ can be viewed as a regular tessellation of the sphere, Euclidean plane or hyperbolic plane, according as $\frac{1}{p}+\frac{1}{q}>\frac{1}{2}$, $=\frac{1}{2}$ or $<\frac{1}{2}$, respectively.)   

Whenever $\mc P\searrow \mc Q$, we find that  $\mc Q$ is totally
determined by $\mc P$ and the stabilizer $N$ of a chosen base flag $\Phi$ of $\mc Q$ under the flag
action of $\Gamma(\mc P)$. Indeed,  $\mc Q = \mc P / N$, the polytope whose faces are orbits under the action of $N$ on $\mc P$. For further details, refer to \cite{Har99}.

   For a given abstract polyhedron $\mc P$, we define its flag graph $\mc{GF}(\mc P)$ as the
edge-labeled graph whose vertex set consists of all flags of $\Po$, where two vertices (flags) are joined by an edge
 if and only if they are $i$-adjacent for some $i=0, 1, 2$. We label each edge with with $i$ according to the $i$-adjacency determining the edge; e.g., if $\Psi^{0}=\Upsilon$, then the edge connecting $\Psi$ and $\Upsilon$ is labeled with a 0.
\section{The Structure of Stabilizers for Tilings}

In this section we shall use the flag graph of a uniform tiling $\Q$ to determine the stabilizer of a given flag
of $\Q$ under the flag action of the automorphism group of a regular cover $\Po$. We begin with some basic graph theoretical definitions.

We define a {\em walk} in the flag graph of $\mc Q$ to be a sequence of vertices of $\mc{GF}(\mc Q)$ (that is, flags of $\mc Q$) $\alpha=(\Psi_{0},\Psi_{1},\ldots,\Psi_{n})$ (possibly infinite) such that $\Psi_{i}, \Psi_{i+1}$ share an edge; if all the vertices are distinct then we say that $\alpha$ is a {\em path}. We define $|\alpha|$, the {\em length} of $\alpha$, to be $n$.  We will use juxtaposition to denote the concatenation of walks, so if $\beta=(\Psi_{n},\Psi_{n+1},\ldots,\Psi_{n+m})$, then $\alpha\beta=(\Psi_{0},\Psi_{1},\ldots,\Psi_{n},\Psi_{n+1},\ldots,\Psi_{n+m})$ has $|\alpha\beta|=|\alpha|+|\beta|$. If $\alpha=(\Psi_{0},\Psi_{1},\ldots,\Psi_{n})$ is a walk in $\mc {GF}(\mc Q)$ we define an associated word $w_{\alpha}=\rho_{i_{0}}\rho_{i_{1}}...\rho_{i_{n-1}}$ in the generators of an associated string C-group, where $\Psi_{j+1}=\Psi_{j}^{\rho_{i_{j}}}$. Conversely, given a flag $\Phi$ of $\Q$, any word $w=\rho_{i_{0}}\cdots \rho_{i_{n-1}}$ on the generators of $\Gamma$, determines in a natural way  the walk $\alpha_{w}=(\Phi=\Psi_{0},\ldots,\Psi_{n})$ in $\mc{GF}(\mc Q)$ where $\Psi_{j+1}=\Psi_{j}^{\rho_{i_{j}}}$.

Suppose we have a walk of the form $\beta=(\Psi_{0},\Psi_{1},...,\Psi_{k-1},\Psi_{k},\Psi_{k-1},...,\Psi_{0})$, and let $\alpha=(\Psi_{0},\Psi_{1},...,\Psi_{k-1},\Psi_{k})$, then $\beta$ is the walk obtained from $\Psi$ by the word $w_{\alpha}w_{\alpha}^{-1}=\varepsilon$, which maps to the trivial word in the monodromy group. We may insert or delete such strings in walks at will; we say that two walks that differ only by such redundant terms are equivalent and denote this relation by $\sim$.

Given a polytope \mc Q with regular cover \mc P whose automorphism group is $\Gamma$, to construct a representation of \mc Q as a quotient of \mc P, it is necessary to identify $N=Stab_{\Gamma}(\Phi)$, where $\Phi$ is a specified {\em base flag} of \mc Q. Throughout the discussion that follows, any walk determined by a word $w\in\Gamma$ will be assumed to start at $\Phi$. Let $T$ be a spanning tree for $\mc{GF}(\mc Q)$. For a given (oriented) edge $e=(\Psi,\Upsilon)\in\mc{GF}(\mc Q)$, we define $\beta_{e}$ to be the concatenation of the unique walk $x\in T$ from $\Phi$ to $\Psi$ with $e$ and the unique walk $y\in T$ from $\Upsilon$ to $\Phi$.   An essential tool in identifying generators for $N$ is the complement of a spanning tree (tree containing all vertices) in the flag graph of \mc Q, as seen below.

\begin{theorem}\label{T:SpanningTrees} Let $T$ be a spanning tree in $\mc{GF}(\mc Q)$ rooted at $\Phi$, a specified (base) flag of \mc Q. For each edge $e=(\Psi,\Upsilon)$ of $\mc{GF}(\mc Q)$, define the unique walk $\beta_{e}$ as above. 
Then $S=\{w_{\beta_{e}}:e\in\mc{GF}(\mc Q)\setminus T\}$ is a generating set for $Stab_{\Gamma}(\Phi)$.\label{t:treeGenerates}


\begin{proof} First, we note that it follows easily from the axioms that \mc Q is at most a countable set, and so $\mc{GF}(\mc Q)$ is a finite or countable graph, implying that the requisite spanning tree $T$ exists. Second,  it is worth noting that $\beta_{e}$ is well defined because there is exactly one path connecting any two vertices of $\mc{GF}(\mc Q)$ in $T$. Third, we observe that any walk corresponding to an element of the stabilizer of $\Phi$ must be closed, so if $w\in Stab_{\Gamma}(\Phi)$, then $\alpha_{w}$ starts and ends at $\Phi$.

It suffices then to show that for any element $w\in Stab_{\Gamma}(\Phi)$, that $\alpha_{w}$ may be obtained as a union of walks of the form $\beta_{e}$: that is, $\alpha_{w}\sim \beta_{e_{1}}\cdots\beta_{e_{k}}$.

We will proceed by induction on $n_{\gamma}$, the number of times  a closed walk $\gamma$ starting and ending at $\Phi$ traverses edges of $\mc{GF}(\mc Q)\setminus T$ . If $n_{\gamma}=0$ then $\gamma$ lies entirely in $T$, and so the corresponding word in the generators of $\Gamma$ is trivial (that is, reduces to $\varepsilon$). If $n_{\gamma}=1$ then $\gamma$ contains only a single edge $e_{1}\in E(\mc{GF}(\mc Q)\setminus T)$. Thus the remainder of $\gamma$ is in $T$, and so is unique (up to equivalence). Thus $\gamma=\beta_{e_{1}}$.

Suppose now that we have shown that for any closed walk $\gamma$ containing up to $k$ edges of $\mc{GF}(\mc Q)\setminus T$, $\gamma$ may be written as a concatenation of corresponding closed walks $\beta_{e_{1}},\ldots,\beta_{e_{k}}$. Let $\delta$ be a closed walk at $\Phi$ containing $(k+1)$ edges of $\mc{GF}(\mc Q)\setminus T$, and denote them $e_{1},\ldots,e_{k+1}$ in the order they are traversed by $\delta$. Let $\Psi_{i,1}$ and $\Psi_{i,2}$ denote the vertices---in the order traversed---of the edge $e_{i}$. Let  $\tau$ be the unique path in $T$ connecting $\Phi$ and $\Psi_{k+1,1}$ (it is possible $e_{k}$ and $e_{k+1}$ share a vertex). Denote by $\delta_{1}$ the portion of the walk $\delta$ connecting $\Phi$ to $\Psi_{k+1,1}$ containing the edges $e_{1},\ldots, e_{k}$ and by $\delta_{2}$ the portion of the walk $\delta$ connecting  $\Psi_{k+1,1}$ and $\Phi$ containing the edge $e_{k+1}$. Then $\delta_{{1}} \tau^{-1}$ is a closed walk at $\Phi$ containing $k$ edges of $\mc{GF}(\mc Q)\setminus T$ and $\tau\delta_{2}$ is a closed walk at $\Phi$ containing 1 edge of $\mc{GF}(\mc Q)\setminus T$, and $\delta_{1}\tau^{-1}\tau\delta_{2}\sim\delta$. By the strong inductive hypothesis, $\delta_{1}\tau^{-1}\sim \beta_{e_{1}}\beta_{e_{2}}\cdots\beta_{e_{k}}$ and $\tau\delta_{2}\sim\beta_{e_{k+1}}$. Thus $\delta\sim\beta_{e_{1}}\beta_{e_{2}}\cdots\beta_{e_{k+1}}$, completing the induction.

\end{proof}

\end{theorem}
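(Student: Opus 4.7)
The plan is to treat this as the standard combinatorial-group-theoretic fact that the elements of the stabilizer correspond to ``loops'' in $\mc{GF}(\mc Q)$ based at $\Phi$, and these loops are generated by the ones that use exactly one non-tree edge. First I would dispatch the preliminaries: the countability of $\mc Q$ forces $\mc{GF}(\mc Q)$ to be a countable graph so a spanning tree rooted at $\Phi$ exists; and $\beta_e$ is well-defined since in any tree there is a unique walk between two given vertices. The bridge to the group-theoretic statement is the observation that $w\in\mathrm{Stab}_\Gamma(\Phi)$ if and only if the walk $\alpha_w$ is closed at $\Phi$, which is immediate from the definition of the flag action.

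Given this, the claim reduces to showing that every closed walk $\gamma$ at $\Phi$ satisfies $\gamma\sim\beta_{e_1}\beta_{e_2}\cdots\beta_{e_k}$ for some edges $e_1,\ldots,e_k\in\mc{GF}(\mc Q)\setminus T$. I would argue by (strong) induction on $n_\gamma$, the number of non-tree edges traversed by $\gamma$, counted with multiplicity. In the base case $n_\gamma=0$, the walk $\gamma$ lies entirely in $T$, and I would need to show it is $\sim$-equivalent to the empty walk. This follows from a secondary induction on $|\gamma|$: choose a vertex of $\gamma$ at maximal distance from $\Phi$ in $T$; this vertex must be flanked in $\gamma$ by the same adjacent vertex on both sides (else $T$ would contain a cycle), producing an immediately deletable backtrack. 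The case $n_\gamma=1$ follows from the base case applied to the tree portions flanking the unique non-tree edge.

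For the inductive step with $n_\delta=k+1$, label the non-tree edges $e_1,\ldots,e_{k+1}$ in the order traversed, let $\tau$ be the unique $T$-path from $\Phi$ to the initial vertex of $e_{k+1}$, and insert $\tau^{-1}\tau$ at the moment just before $e_{k+1}$ is traversed. This writes $\delta\sim(\delta_1\tau^{-1})(\tau\delta_2)$ as a concatenation of two closed walks at $\Phi$, one using $k$ non-tree edges and the other using only $e_{k+1}$. Applying the strong inductive hypothesis to both factors gives the desired decomposition.

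The main obstacle, in my view, is the $n_\gamma=0$ base case: although intuitively obvious, it is the place where the equivalence relation $\sim$ (only defined by inserting and deleting backtrack pairs $w_\alpha w_\alpha^{-1}$) must actually be exploited, and one must carefully locate a consecutive backtrack inside an arbitrary closed tree walk before any cancellation can be performed. Once this is settled, the inductive step is essentially a bookkeeping argument; a minor subtlety is that $\tau$ may be empty when consecutive non-tree edges share a vertex, but this causes no problem since the empty walk is an identity for concatenation.
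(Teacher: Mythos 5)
Your proposal follows essentially the same argument as the paper: induction on the number of non-tree edges traversed by a closed walk at $\Phi$, with the inductive step carried out by inserting $\tau^{-1}\tau$ before the last non-tree edge to split $\delta\sim(\delta_{1}\tau^{-1})(\tau\delta_{2})$. Your additional secondary induction for the base case $n_{\gamma}=0$ (locating a backtrack at a vertex of maximal tree-distance from $\Phi$) is a welcome bit of rigor that the paper simply asserts, but it does not change the overall route.
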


\begin{corollary}\label{T:finitePolytopes} Any finite polytope \mc Q  admits a quotient presentation $\Gamma/N$ in which $N$ has finitely many generators.\end{corollary}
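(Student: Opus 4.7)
The plan is a direct application of Theorem~\ref{T:SpanningTrees}. Since $\mc Q$ is a finite polytope, the flag set $\mc F(\mc Q)$ is finite, and hence the flag graph $\mc{GF}(\mc Q)$ is a finite graph. Any spanning tree $T$ of $\mc{GF}(\mc Q)$ has exactly $|\mc F(\mc Q)|-1$ edges, leaving only finitely many edges in the cotree $\mc{GF}(\mc Q)\setminus T$.

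First I would fix a regular cover $\mc P$ of $\mc Q$; for instance, the universal regular polytope of the appropriate rank, whose automorphism group is the Coxeter group $[\infty, \ldots, \infty]$, covers every polytope of that rank via its flag action. Taking $\Gamma = \Gamma(\mc P)$ and a chosen base flag $\Phi$ of $\mc Q$, Theorem~\ref{T:SpanningTrees} supplies a generating set $S = \{w_{\beta_e} : e \in \mc{GF}(\mc Q)\setminus T\}$ for $N = \mathrm{Stab}_\Gamma(\Phi)$. Because the cotree is finite, $S$ is finite, and $\mc Q \cong \mc P/N = \Gamma/N$ is the desired presentation.

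The only subtle point is that Theorem~\ref{T:SpanningTrees} is stated for polyhedra while the corollary concerns polytopes of arbitrary rank. I expect this to be harmless rather than a genuine obstacle: the proof of the theorem uses only the existence of $i$-adjacent flags (guaranteed in every rank by the diamond condition), the labelled flag graph, and the flag action from the corresponding string C-group. All of these persist in higher rank, so the spanning-tree argument goes through verbatim; indeed, the authors signal precisely this extension in the paragraph preceding the corollary, so no new work is required beyond invoking the theorem.
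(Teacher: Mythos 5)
Your proposal is correct and follows the paper's argument exactly: the paper's proof is the one-line observation that a spanning tree of the finite flag graph omits only finitely many edges, so Theorem~\ref{T:SpanningTrees} yields a finite generating set for $N$. Your additional remarks about choosing the universal cover and the rank-independence of the spanning-tree argument are consistent with the paper's own framing (the authors note before the corollary that Theorem~\ref{T:SpanningTrees} applies in general rank).
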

\begin{proof} It suffices to observe that any tree in $\mc{GF}(\mc Q)$ omits a finite number of edges.\end{proof}

\begin{theorem}\label{T:UniformNotFinite} Let \mc Q be a uniform  tiling of the plane and $\Phi$ a specified base flag in \mc Q. Then $Stab_{\Gamma}(\Phi)$ has no finite generating set of words in the generators of $\Gamma$.
\begin{proof}
Define the distance $d(\Upsilon,\Psi)$ between two flags $\Upsilon$ and $\Psi$  to be the length of the shortest path connecting those two flags in the flag graph. In particular, to each flag $\Psi$ of $\mc Q$ we may associate its distance $d_{\Psi}$ to the base flag $\Phi$.

Suppose, for the sake of contradiction, that $S=\{w_{1},\ldots,w_{k}\}$ is a finite set of words in the generators of $\Gamma$ that generates $Stab_{\Gamma}(\Phi)$. Note then that each $w_{i}$ determines a unique walk $\alpha_{w_{i}}$ that starts and ends at $\Phi$. In particular, the length of each of these walks is finite. Also observe, that the product of any of the elements of $S$ will correspond to a concatenation of the walks $\{\alpha_{w_{1}},\ldots,\alpha_{w_{k}}\}$. In particular, no product of the elements of $S$ or their inverses will yield a walk starting at $\Phi$ that traverses an edge that does not belong to one of the $\alpha_{w_{i}}$.

Let $d_{i}=\displaystyle\max_{\Psi\in\alpha_{w_{i}}}d_{\Psi}$, and $d=\max_{i}d_{i}$; then $d$ measures the greatest distance between $\Phi$ and any flag in one of the $\alpha_{w_{i}}$. Note that $d<\infty$ since the set of vertices in all the walks  $\{\alpha_{w_{1}},\ldots,\alpha_{w_{k}}\}$ is finite.  

Since \mc Q is a  uniform tiling of the plane, there exist integers $m$ and $n$ where $n$ is the degree at each vertex, and $m$ is divisible by the number of sides of each of the faces of \mc Q.  Then without loss of generality we may assume that the polyhedron with automorphism group $\Gamma$ is of type $\{m,n\}$. Also note that since \mc Q is infinite, there exists a vertex $v$ of \mc Q such that for any flag $\Psi$ containing $v$, $d_{\Psi}>d$. Let $f$ be a face of $\mc Q$ containing $v$ with $q$ sides such that $m/q\ne 1$: note that such a face must exist since \mc Q is not regular. Let $T$ be a spanning tree of $\mc{GF}(\mc Q)$ and  let $\Upsilon$ be a flag of \mc Q containing $v$ and $f$. Let $\alpha_{\Upsilon}$ be the unique path connecting $\Phi$ and $\Upsilon$ in $T$, and let $w_{\alpha_{\Upsilon}}$ be the corresponding word in $\Gamma$. We now observe that $\sigma=w_{\alpha_{\Upsilon}}(\rho_{0}\rho_{1})^{q}w_{\alpha_{\Upsilon}}^{-1}$ is a nontrivial element of $Stab_{\Gamma}(\Phi)$ (since $(\rho_{0}\rho_{1})^{q}$ is nontrivial in $\Gamma$). Moreover, since $d_{\Upsilon}>d$, $\sigma\notin\langle S\rangle$, contradicting our initial assumption and therefore no finite set generates $Stab_{\Gamma}(\Phi)$.
\end{proof}
\end{theorem}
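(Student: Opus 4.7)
The plan is to argue by contradiction. I would assume $S=\{w_1,\ldots,w_k\}$ is a finite set of words in the generators of $\Gamma$ that generates $Stab_{\Gamma}(\Phi)$, and note that each $w_i$ determines a closed walk $\alpha_{w_i}$ of finite length at $\Phi$ in $\mc{GF}(\mc Q)$. Setting $d$ to be the maximum distance (in $\mc{GF}(\mc Q)$) from $\Phi$ to any flag appearing in some $\alpha_{w_i}$, the quantity $d$ is finite. The key observation is that any word formed as a product of elements of $S \cup S^{-1}$ corresponds to a concatenation of the walks $\alpha_{w_i}$ and their reverses, and therefore can traverse only edges incident to flags within distance $d$ of $\Phi$.

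To contradict this I would produce an element of $Stab_{\Gamma}(\Phi)$ whose walk forces us outside the $d$-ball. Two features of $\mc Q$ enable this: since $\mc Q$ is an infinite planar tiling, I can choose a vertex $v$ of $\mc Q$ so far from $\Phi$ (in the tiling, hence also in $\mc{GF}(\mc Q)$) that every flag containing $v$ lies at distance greater than $d$ from $\Phi$; since $\mc Q$ is uniform but not regular, I can find a face $f$ incident to $v$ whose co-degree $q$ is strictly less than the face parameter $m$ of the chosen regular cover of type $\{m,n\}$, where I would take $m$ to be a common multiple of all face co-degrees of $\mc Q$. This guarantees that $(\rho_0\rho_1)^q$ is a nontrivial element of $\Gamma=[m,n]$.

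I would then select a flag $\Upsilon$ containing both $v$ and $f$ and any word $w_\Upsilon$ whose walk in $\mc{GF}(\mc Q)$ carries $\Phi$ to $\Upsilon$, and form the conjugate
\[
\sigma := w_\Upsilon\,(\rho_0\rho_1)^q\,w_\Upsilon^{-1}.
\]
Then $\sigma$ is a nontrivial element of $\Gamma$ that fixes $\Phi$, since the flag action of $(\rho_0\rho_1)^q$ on $\Upsilon$ traces the boundary of $f$ once and returns $\Upsilon$ to itself. On the other hand, the walk of $\sigma$ traverses edges at the flag $\Upsilon$, which lies outside the $d$-ball, and so $\sigma\notin\langle S\rangle$, contradicting our assumption. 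The step requiring most care is the selection of $m$ and $q$: non-regularity of $\mc Q$ is essential here, since it is what forces a face co-degree strictly less than $m$ and hence the nontriviality of $(\rho_0\rho_1)^q$ in $[m,n]$.
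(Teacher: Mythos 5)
Your proposal is correct and follows essentially the same argument as the paper: a finite generating set would confine all associated walks to a bounded neighborhood of $\Phi$, while the non-regularity of the uniform tiling supplies a conjugate $w_\Upsilon(\rho_0\rho_1)^q w_\Upsilon^{-1}$ of a nontrivial rotation about a distant small face, which stabilizes $\Phi$ but lies outside that neighborhood. The only cosmetic difference is that the paper takes $w_\Upsilon$ along a spanning-tree path while you allow an arbitrary word reaching $\Upsilon$, which changes nothing.
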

Note, however, that the conclusion of this theorem is decidedly different than in the case of a regular tiling of the plane. For example, if we consider the regular tiling of the plane $\mc R =\{3,6\}$ by triangles, the generating set  for the stabilizer of a specified base flag is precisely the defining relations for \mc R. Specifically, $\Gamma(\mc R)=\langle s_{0},s_{1},s_{2}\rangle/\langle s_{0}^{2},s_{1}^{2},s_{2}^{2},(s_{0}s_{1})^{3},(s_{0}s_{2})^{2},(s_{1},s_{2})^{6}\rangle$, and so $\{s_{0}^{2},s_{1}^{2},s_{2}^{2},(s_{0}s_{1})^{3},(s_{0}s_{2})^{2},(s_{1},s_{2})^{6}\}$ forms a finite generating set for $Stab_{\Gamma}(\Phi)$.

To demonstrate that the algorithms for producing the generators listed in the next section suffice, we require Theorem \ref{t:VFgenerate} to establish that generators corresponding to walks from the base flag to (and around) each face and vertex are sufficient. Lemma \ref{t:vertexLemma} demonstrates that only one such generator for each vertex or face of the polyhedron is necessary. These results allow us to easily find an enumerable set of generators.

\begin{theorem} Let the polyhedron \mc Q be a map on the sphere or the Euclidean plane, $\Phi$ the base flag of \mc Q, and $\Gamma$ a string $C$-group with generators $\rho_{0},\rho_{1},\rho_{2}$ and a flag action on \mc Q. Then $Stab_{\Gamma}(\Phi)$ is generated by the set of elements $$W_{v}=w_{v}^{-1}(\rho_{1}\rho_{2})^{q_{v}}w_{v} \text{ and }W_{f}=w_{f}^{-1}(\rho_{0}\rho_{1})^{p_{f}}w_{f},$$ where $v$ is any vertex of \mc Q of degree $q_{v}$, and $f$ is any face of $\mc Q$ with $p_{f}$ edges, and $w_{v}$ and $w_{f}$ are words which map $\Phi$ to a flag containing $v$ or $f$, respectively.
\label{t:VFgenerate}
\begin{proof}    Since $\Gamma$ has a flag action on \mc Q,  \mc Q is a quotient of the polytope $\mc P=\mc P(\Gamma)$. Also, $\mc{GF}(\mc Q)$ has a natural plane embedding. Therefore it makes sense to consider the cells of $\mc{GF}(\mc Q)$. We say that a walk {\em encloses} a cell if the winding number of the walk about any point in that cell is not zero.

Clearly, all the elements $W_{v}$ and $W_{f}$ belong
to $Stab_{\Gamma}(\Phi)$, since each of these fixes the base flag.  One useful observation in what follows is that  $w_{z}(\rho_{0}\rho_{2})^{2}w_{z}^{-1}$ (i.e., a walk out to, and then around an edge of the polyhedron and back) is always trivial since $(\rho_{0}\rho_{2})^{2}$ is trivial in the covering group.
On the other hand, any element in $Stab_{\Gamma}(\Phi)$ corresponds to a closed walk in the flag
graph of $\mc Q$ starting and ending at $\Phi$. 

Suppose, for the sake of contradiction,  that there are nontrivial elements in $Stab_{\Gamma}(\Phi)$ which are not generated by
the elements stated in the theorem. In particular, each of the corresponding walks must enclose at least two cells. Among these elements, we consider all those where the number
of cells enclosed by the corresponding walks in the flag graph is minimal. Among the latter elements, we choose a particular $w_0$ which has minimal length as a word on the generators of $\Gamma$.  That is, in the flag graph, we are identifying a walk $\alpha_{w_{0}}$ with a minimal number of edges that cannot be expressed as a concatenation of walks associated with the $W_{v}$ and $W_{f}$. By construction, $\alpha_{w_{0}}$ encloses a connected region; since it is chosen to have a minimal number of edges, it cannot wind around the connected region more than once.

Let $\Psi$ be the first vertex of $\mc{GF}(\mc Q)$ (i.e., a flag of \mc Q) other than $\Phi$ that appears at least twice in the walk $\alpha_{w_{0}}$. (If no such $\Psi$ exists, we may move immediately to the last part of the proof.) Determine walks $\alpha_{1},\alpha_{2},\alpha_{3}$ so that $\alpha_{w_{0}}=\alpha_{1}\alpha_{2}\alpha_{3}$, $\alpha_{1}$ is a walk from $\Phi$ to $\Psi$, $\alpha_{2}$ is a closed walk at $\Psi$ and $\alpha_{3}$ is a walk from $\Psi$ to $\Phi$. Moreover, we require that $\Psi$ does not appear in any edge of $\alpha_{1}$ or $\alpha_{3}$. Let $\widehat{\alpha}=\alpha_{1}\alpha_{2}\alpha_{1}^{-1}$ and $\tilde \alpha =\alpha_{1}\alpha_{3}$; then $\alpha_{w_{0}}\sim\widehat\alpha \tilde \alpha$. Since $\tilde \alpha$ is shorter than $\alpha_{w_{0}}$, the word corresponding to $\tilde \alpha$ must be generated by the elements of the form  $W_{v}$ and $W_{f}$. Since $\alpha_{w_{0}}$ is not generated by such elements, $\widehat \alpha$ must not be either.

Since $\alpha_{w_{0}}$ is the shortest such walk under consideration, $|\alpha_{w_{0}}|\le|\widehat \alpha|$, so that
\begin{align*}
|\alpha_{w_{0}}|=|\alpha_{1}|+|\alpha_{2}|+|\alpha_{3}|& \le |\widehat \alpha|=2|\alpha_{1}|+|\alpha_{2}|,
\end{align*}
which forces  $|\alpha_{3}|\le |\alpha_{1}|.$
The same analysis done to $\alpha_{w_{0}}^{-1}$ instead of $\alpha_{w_0}$ implies that
\begin{align*}
|\alpha_{1}|=|\alpha_{1}^{-1}|& \le |\alpha_{3}^{-1}|=|\alpha_{3}|,
\end{align*}
so that $|\alpha_1 |= |\alpha_3|$.

We claim now that $\tilde \alpha$ encloses no cells, i.e., $\alpha_{3}=\alpha_{1}^{-1}$.  Note that $\widehat{\alpha}=\alpha_{1}\alpha_{2}\alpha_{1}^{-1}$ has the same length as $\alpha_{w_{0}}$, 
and corresponds to a word that is not generated by elements $W_{v}$ and $W_{f}$. Since $\alpha_{w_{0}}$ enclosed the least number of cells of any such walk, and $\widehat{\alpha}$ can't enclose any cell not enclosed by $\alpha_{w_{0}}$, $\tilde\alpha$ must not enclose any cells at all because $\alpha_{w_{0}}\sim\widehat{\alpha}\tilde{\alpha}$. Thus $\alpha_{3}=\alpha_{1}^{-1}$. This means that $\Psi$ must have been adjacent to $\Phi$ and that $\alpha_{1}$ is a single edge of $\mc {GF}(\mc Q)$.  We can repeat this argument on successive repeated flags and conclude therefore that $\alpha_{w_{0}}$ looks like the closed walk in Figure \ref{generatorcell}(a), that is, a cycle with a tail starting at $\Phi$.

\begin{figure}
\begin{center}
\includegraphics[width=10cm, height=4.5cm]{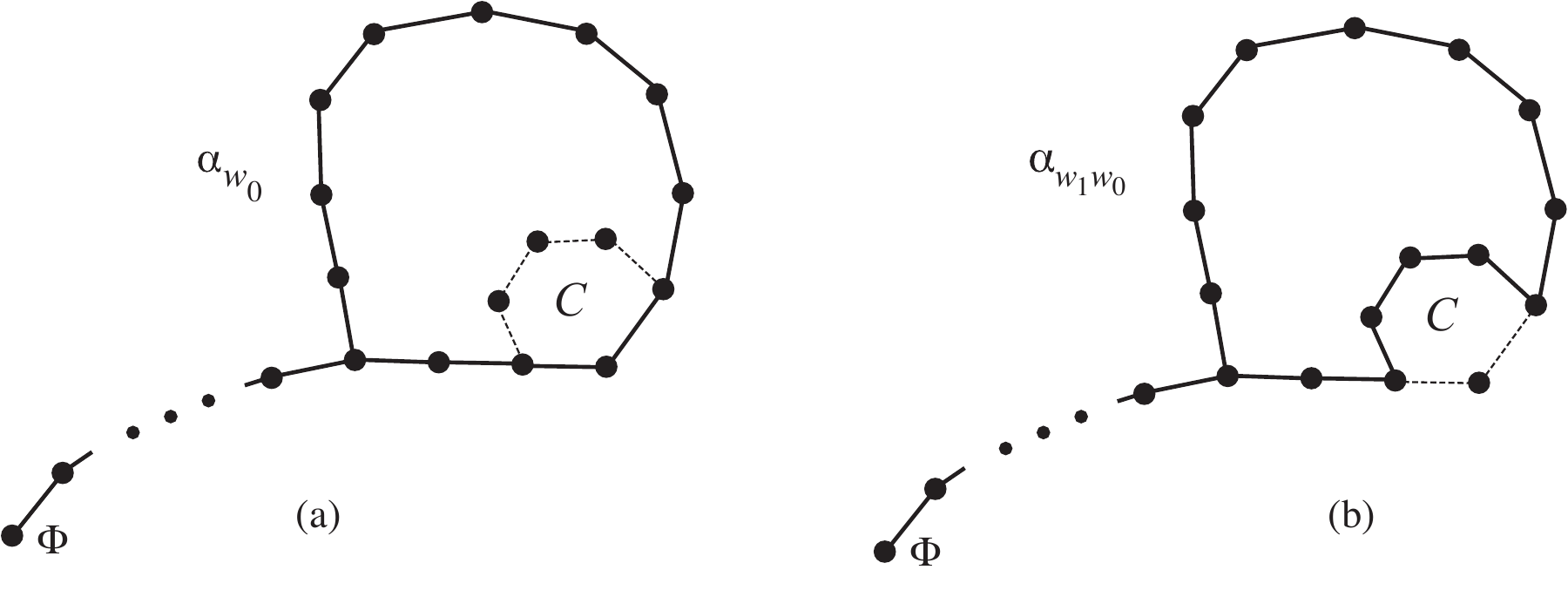}
\caption{The walks $\alpha_{w_{0}}$ and $\alpha_{w_{1}w_{0}}$ indicated with bold edges.}
\label{generatorcell}\end{center}
\end{figure}

Finally, consider a cell $C$  of the flag graph enclosed by, and sharing an edge with, $\alpha_{w_{0}}$.  Note that all cells of the flag graph are even cycles. 
Assume that $C$ is a $q$-cycle with edges of alternating labels $i$ and $j$, let $z$ be a vertex of $C$ which belongs also to $\alpha_{w_{0}}$, and let $w_z$ be the that part of the word $w_0$ corresponding to the walk from the initial vertex  $\Phi$ to $z$. There are now two cases to consider. If $q=4$, then the edge labels for the cycle must be 0 and 2, which contradicts the cell enclosing minimality of $\alpha_{w_{0}}$ since  $w_{z}(\rho_{0}\rho_{2})^{2}w_{z}^{-1}$ is trivial in $\Gamma$ and so $C$ could have been removed from $\alpha_{w_{0}}$. If $q>4$ 
then let $w_1 = w_z (\rho_i \rho_j)^{q/2} w_z^{-1} $. Observe that, by construction of $w_0$, the set of cells enclosed by $w_1$ most be non-empty. In particular, $C$ is not enclosed oppositely by $w_0$ and $w_1$. Note also that $w_{1}\in Stab_{\Gamma}(\Phi)$, so therefore $w_1 w_0 \in Stab_{\Gamma}(\Phi)$ also. By construction, the walk $\alpha_{w_{1}w_{0}}$ encloses all cells enclosed by $\alpha_{w_0}$ except $C$ (see Figure \ref{generatorcell}(b)). Thus, by hypothesis, $w_1 w_0$ is generated by the elements $W_{v}$ and $W_{f}$ because $\alpha_{w_{1}w_{0}}$ encloses fewer cells than $\alpha_{w_{0}}$. Hence,  $w_0$ must also be generated by elements of the form $W_{v},W_{f}$, which contradicts our initial supposition.

\end{proof}
\end{theorem}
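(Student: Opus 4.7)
The plan is to exploit the fact that when $\mc Q$ is a map on the sphere or the Euclidean plane, its flag graph $\mc{GF}(\mc Q)$ inherits a natural planar embedding whose $2$-cells fall into three classes: a $4$-cycle with alternating labels $0,2$ around each edge of $\mc Q$ (corresponding to the relation $(\rho_0\rho_2)^2=\varepsilon$), a $2p_f$-cycle with alternating labels $0,1$ around each face $f$, and a $2q_v$-cycle with alternating labels $1,2$ around each vertex $v$. A closed walk tracing a single face-cell or vertex-cell, conjugated back to $\Phi$ by a chosen path $w_f$ or $w_v$, is precisely a generator of the form $W_f$ or $W_v$; tracing an edge-cell yields the identity in $\Gamma$.

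I would then argue by a double minimality argument. Assume for contradiction that some $w_0\in Stab_\Gamma(\Phi)$ is not generated by the $W_v,W_f$; among such elements first choose one whose walk $\alpha_{w_0}$ encloses the fewest $2$-cells of $\mc{GF}(\mc Q)$, and among those one whose word length is minimal. The objective is to peel a boundary cell off $\alpha_{w_0}$ by multiplying by a suitable $W_v$ or $W_f$, contradicting minimality of the enclosed-cell count.

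Before peeling, I would show $\alpha_{w_0}$ is essentially a simple cycle. If some flag $\Psi\neq\Phi$ appears twice, split $\alpha_{w_0}=\alpha_1\alpha_2\alpha_3$ with $\alpha_2$ a closed subwalk at $\Psi$, and rewrite $\alpha_{w_0}\sim(\alpha_1\alpha_2\alpha_1^{-1})(\alpha_1\alpha_3)$. The factor $\alpha_1\alpha_3$ is strictly shorter, so by length minimality it is in $\langle W_v,W_f\rangle$; hence $\alpha_1\alpha_2\alpha_1^{-1}$ is not, and the same comparison gives $|\alpha_3|\le|\alpha_1|$. Running the argument on $\alpha_{w_0}^{-1}$ forces the reverse inequality, so $|\alpha_1|=|\alpha_3|$, and then cell-count minimality forces $\alpha_3=\alpha_1^{-1}$. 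Iterating, the only non-simple features of $\alpha_{w_0}$ form a tree-like tail rooted at $\Phi$.

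The main obstacle is the peeling step itself. Choose a $2$-cell $C$ on the boundary of the region enclosed by $\alpha_{w_0}$ sharing an edge with the walk, pick a vertex $z$ of $C$ lying on $\alpha_{w_0}$, and let $w_z$ be the prefix of $w_0$ reaching $z$. If $C$ is a $4$-cell, $w_z(\rho_0\rho_2)^2w_z^{-1}$ is already trivial in $\Gamma$, and the length of $\alpha_{w_0}$ can be reduced by excising $C$, contradicting length minimality. Otherwise $C$ yields an element $w_1=w_z(\rho_i\rho_j)^{q/2}w_z^{-1}$ of the form $W_v$ or $W_f$; the walk $\alpha_{w_1w_0}$ encloses the cells enclosed by $\alpha_{w_0}$ except $C$, so by cell-count minimality $w_1w_0\in\langle W_v,W_f\rangle$, forcing $w_0$ to be too, the desired contradiction. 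The delicate point is verifying that $w_1$ and $w_0$ traverse $C$ in opposite orientations so that $C$ truly cancels; this follows from the walk enclosing a connected planar region without winding about it more than once, a consequence of length minimality together with planarity of $\mc{GF}(\mc Q)$.
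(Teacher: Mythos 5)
Your proposal is essentially the paper's own argument: the same identification of the three cell types in the planar flag graph, the same double minimality (fewest enclosed cells, then minimal word length), the same decomposition $\alpha_{w_0}\sim(\alpha_1\alpha_2\alpha_1^{-1})(\alpha_1\alpha_3)$ forcing a cycle-with-tail shape, and the same cell-peeling step via $w_1=w_z(\rho_i\rho_j)^{q/2}w_z^{-1}$ with the $4$-cell case handled by triviality of $(\rho_0\rho_2)^2$. The reasoning is correct and matches the paper's proof in all essentials.
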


It is worth noting that this theorem does not necessarily hold for abstract polyhedra that admit presentations as maps on the projective plane or surfaces of higher genus, because the notion of winding number is not well defined in these settings. For example, on the hemi-octahedron $\rho_{1}\rho_{2}\rho_{1}\rho_{0}(\rho_{1}\rho_{2})^{2}\rho_{0}$ (the antipodal map on the octahedron) fixes the bases flag but is not generated by elements of type $W_{v}$ or $W_{f}$.

To demonstrate that a single generator of the type $W_{v}$ or $W_{f}$ for each vertex $v$ or face $f$ suffices to generate $Stab_{\Gamma}(\Phi)$, we must demonstrate that given two walks $w$ and $w'$ to the cell determined by the vertex $v$ or the face $f$,  and given  $w(\rho_{i}\rho_{i+1})^{q}w^{-1}\in Stab_{\Gamma}(\Phi)$, then $w'(\rho_{i}\rho_{i+1})^{q}w'^{-1}$ is automatically in $Stab_{\Gamma}(\Phi)$ as well.

\begin{lemma}\label{l:notallofthem}
   Let $\mc Q$ be a polyhedron, $\Phi$ a flag of $\mc Q$, and $\Gamma$
a string C-group with generators $\rho_0, \rho_1, \rho_2$ and flag action on $\mc Q$.
If $w (\rho_i \rho_{i+1})^q w^{-1} \in Stab_{\Gamma}(\Phi)$ then $w' (\rho_i \rho_{i+1})^q w'^{-1} \in Stab_{\Gamma}(\Phi)$
for any $w'$ such that $\Phi w'$ and $\Phi w$ coincide in their face if $i=0$, and in their vertex if $i=1$. \label{t:vertexLemma}
\end{lemma}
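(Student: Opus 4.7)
My plan is to reinterpret the hypothesis in terms of the flag action and then exploit the local structure of the flag graph around a single face or vertex. Writing $\Psi = \Phi w$ and $\Psi' = \Phi w'$, the hypothesis $w (\rho_i \rho_{i+1})^q w^{-1} \in \mathrm{Stab}_{\Gamma}(\Phi)$ is equivalent to the assertion that the monodromy element $(r_i r_{i+1})^q$ fixes $\Psi$; the goal is to show the same monodromy element fixes $\Psi'$, for then $\Phi w' (\rho_i \rho_{i+1})^q w'^{-1} = \Phi$ immediately. Thus the lemma reduces to a purely local claim about how $(r_i r_{i+1})^q$ acts on the set of flags sharing a common face (when $i=0$) or common vertex (when $i=1$).

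Next I would carry out that local analysis by looking at the appropriate cell of $\mc{GF}(\mc Q)$. If $i=0$ and $f$ is a face of co-degree $p$, the flags containing $f$ form a single $2p$-cycle in the flag graph whose edges alternate in labels $0$ and $1$; the operator $r_0 r_1$ advances two steps around this cycle, so its restriction to the flags containing $f$ has order exactly $p$. Hence $(r_0 r_1)^q$ fixes one flag of this cycle if and only if $p \mid q$, in which case it fixes \emph{every} flag containing $f$. The same reasoning applied with $i=1$ shows that $(r_1 r_2)^q$ fixes every flag containing a given vertex $v$ of degree $d$ as soon as $d \mid q$, which happens as soon as $(r_1 r_2)^q$ fixes one such flag.

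Finally I would assemble these observations: the hypothesis forces the divisibility condition $p \mid q$ (respectively $d \mid q$) on the face (respectively vertex) of $\Psi$; since $\Psi'$ is assumed to share that face (respectively vertex) with $\Psi$, the same divisibility guarantees that $(r_i r_{i+1})^q$ fixes $\Psi'$, giving the desired membership in $\mathrm{Stab}_{\Gamma}(\Phi)$.

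I do not anticipate a serious obstacle here; the only point requiring care is verifying that the cycle around a face (or vertex) really is a single $2p$- (or $2d$-)cycle in $\mc{GF}(\mc Q)$ and that the order of $r_0 r_1$ on this cycle is exactly $p$ (not a proper divisor), both of which follow directly from the diamond condition and the definition of the flag action $\rho_i \mapsto r_i$. Equivalently, one can avoid mentioning orders by just writing $q = kp + s$ with $0 \le s < p$ and noting that $(r_0 r_1)^s$ fixing a flag of a $p$-cycle forces $s=0$.
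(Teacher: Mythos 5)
Your argument is correct, but it takes a genuinely different route from the paper's. You translate both hypothesis and conclusion into the statement that the image of $(\rho_i\rho_{i+1})^q$ under the flag action fixes $\Phi w$ (respectively $\Phi w'$), and then argue combinatorially inside the flag graph: the $2p$ flags containing the common face $f$ (for $i=0$) form one alternately-labeled cycle on which $r_0r_1$ has order exactly $p$, so $(r_0r_1)^q$ fixes one such flag iff $p\mid q$ iff it fixes all of them, which yields the claim since $\Phi w'$ contains $f$. The paper instead stays inside $\Gamma$: because the two flags share a face there is $x\in\langle\rho_0,\rho_1\rangle$ with $\Phi w=\Phi w'x$, and writing $x=(\rho_0\rho_1)^k$ or $x=\rho_1(\rho_0\rho_1)^k$ it finishes by a direct cancellation, using only that powers of $\rho_0\rho_1$ commute (or are inverted by conjugation by $\rho_1$); no divisibility condition, and no knowledge of the co-degree of $f$, is needed, so that computation also covers degenerate cases (e.g.\ apeirogonal faces, where your cycle becomes a two-sided infinite path and the hypothesis is vacuous for $q\neq0$) without comment. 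Your version buys the extra, conceptually useful fact $p\mid q$ (respectively $q_v\mid q$), at the cost of two small points worth tidying: $r_0r_1$ does not advance every flag two steps in the same direction around the $2p$-cycle---it acts as two disjoint $p$-cycles, one per ``orientation,'' though the order-$p$ conclusion and the ``fixes one iff fixes all'' statement survive---and the assertion that the flags of $f$ form a \emph{single} cycle relies on strong connectivity (every face and vertex-figure is a polygon), not merely on the diamond condition. Both proofs ultimately rest on the same underlying fact, namely that $\langle\rho_0,\rho_1\rangle$ acts (via the flag action) transitively on the flags containing a given face.
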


\begin{figure}
\begin{center}
\includegraphics[width=10cm, height=4.5cm]{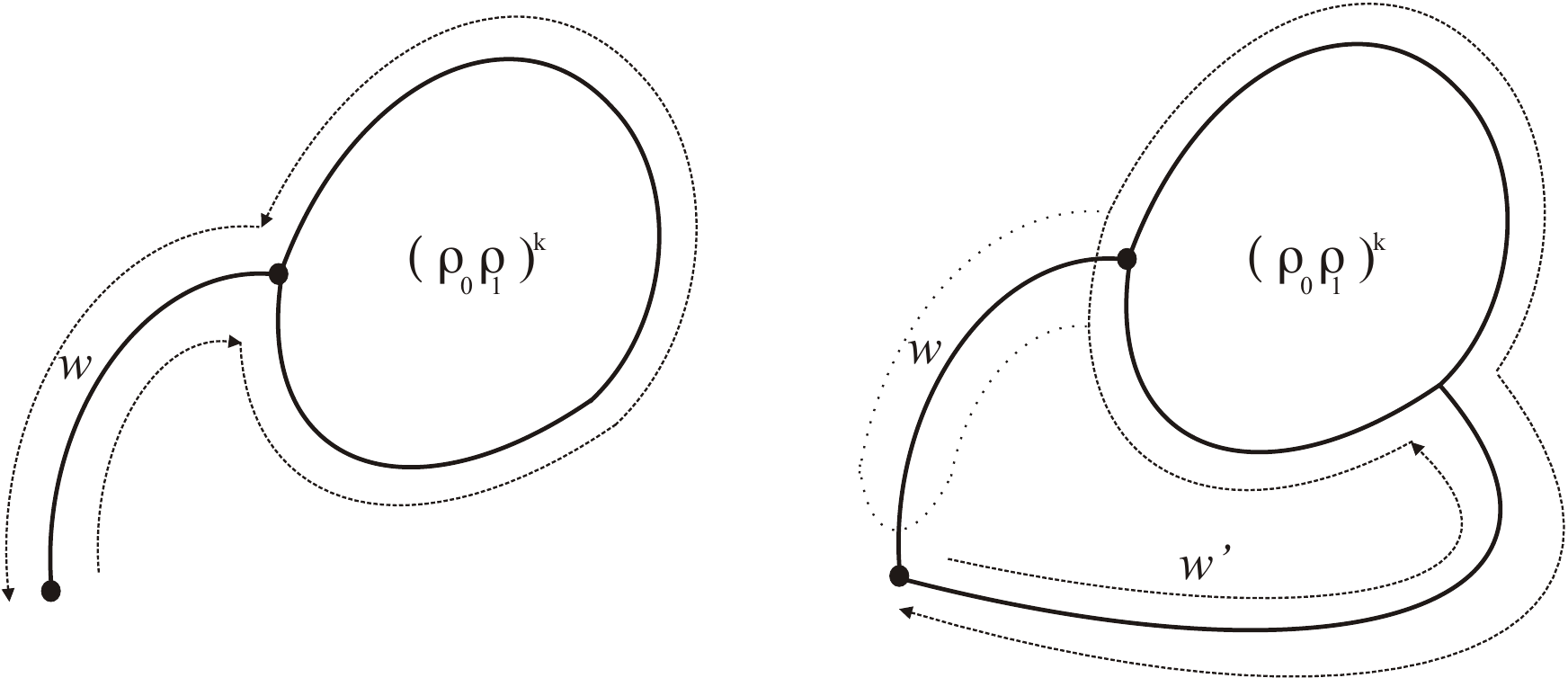}
\caption{One generator of $Stab_{\Gamma}(\Phi)$ induces the other}\label{generatorstab}
\end{center}
\end{figure}

\begin{proof}
We will prove the lemma in the case where $i=0$, and $\Phi w$ and $\Phi w'$ coincide in their face. The identical argument holds for $i=1$, when $\Phi w$ and $\Phi w'$ coincide in their vertex.

   Since the face of $\Phi w'$ coincides with the face of $\Phi w$, there exists $x \in \langle \rho_0, \rho_1 \rangle$
such that $\Phi w = \Phi w' x$. If $x = (\rho_0 \rho_1)^k$ for some integer $k$ then
\begin{eqnarray*}
\Phi w' (\rho_0 \rho_1)^q w'^{-1} &=& \Phi w' (\rho_0 \rho_1)^k  (\rho_0 \rho_1)^{-k} (\rho_0 \rho_1)^q w'^{-1}\\
&=& \Phi w (\rho_0 \rho_1)^{-k} (\rho_0 \rho_1)^q w'^{-1}\\
&=& \Phi w (\rho_0 \rho_1)^q (\rho_0 \rho_1)^{-k} w'^{-1}\\
&=& \Phi w (\rho_0 \rho_1)^{-k} w'^{-1}\\
&=& \Phi w' w'^{-1} = \Phi.
\end{eqnarray*}
A similar computation for the case $x = \rho_1 (\rho_0 \rho_1)^k$ concludes the argument.
%
\end{proof}

\section{Recursively Enumerable Presentations for the Uniform Tilings}
In this section we give recursively enumerable presentations for each uniform tiling by providing explicit generators for the stabilizer of a specified base flag. In each description, the tiling has universal cover \mc P of Schl\"afli type $\{p,q\}$, and $\Gamma=[p,q]=\langle \rho_{0},\rho_{1},\rho_{2}\rangle$ is the corresponding string C-group. We choose particular words $\beta$ and $\gamma$ in $\Gamma$ which act as translations $t_{1}, t_{2}$ on the base flag $\Phi$ with the following properties: 
\begin{enumerate}
   \item the translation vectors corresponding to $\beta$ and $\gamma$ are linearly independent,
   \item the image of $\Phi$ under either translation $t_{1}$ or $t_{2}$  has minimal distance from $\Phi$ among all possible translates in that direction with respect to the symmetry group of the tiling.
\end{enumerate}
Let $\Psi$ be a flag in the orbit of the base flag. It follows from (\ref{eq:actionsymmetry}) that $\Psi{\beta}$
and $\Psi{\gamma}$ are translates (under the symmetry group of the tiling) of $\Psi$, and therefore $\Psi{\beta^k} = \Psi t_1^k$ and $\Psi{\gamma^{k}} = \Psi t_2^k$ where $t_1$ and $t_2$ are the translations such that $\Psi{\beta} = \Psi t_1$ and $\Psi{\gamma} = \Psi t_2$ respectively. We also choose words $\alpha_{i}$ of the form $W_{f}=w_{f}^{-1}(\rho_{0}\rho_{1})^{p_{f}}w_{f}$ (as in Theorem \ref{t:VFgenerate}) for some face $f$ in the $i$-th transitivity class of polygons under that same translation subgroup of the symmetries of the tiling. Note that if the Schl\"afli type of the cover is $\{p,q\}$ then all words of the type $w_{f}^{-1}(\rho_{0}\rho_{1})^{p}w_{f}$ and
$w_{f}^{-1}(\rho_{1}\rho_{2})^{q}w_{f}$ are trivial and may be omitted. For convenience, we will
use the notation $a=\rho_{0},b=\rho_{1},c=\rho_{2}$, and $w_1^{w_2} = w_2^{-1} w_1 w_2$.

\subsection*{3.6.3.6}

This tiling is covered by the universal tiling $\mc P=\{6,4\}$.
We choose a base flag $\Phi$ containing a hexagon of the tiling (note that all of these lie in a single transitivity class under the symmetry group of $\{6, 4\}$). Then, by Theorem
\ref{t:VFgenerate}, $Stab_{\Gamma(\mc P)}(\Phi)$ is generated only by elements $w_{f}^{-1}(\rho_{0}\rho_{1})^{3}w_{f}$, since the elements $w_{f}^{-1}(\rho_{0}\rho_{1})^{6}w_{f}$ and $w_{v}^{-1}(\rho_{1}\rho_{2})^{4}w_{v}$ are trivial for every $w$. The generating elements are thus obtained as conjugates of elements inducing closed walks around the triangles of the tiling. Note that there are only two classes of triangles under the translation group of the tiling. Let $\alpha_{0}=((ab)^{3})^c, \alpha_{1}={((ab)^{3})}^{cb}$, $\beta=ababacbc, \gamma=abcbabcb$ (Figure \ref{f:3.6.3.6}). Then $\alpha_0$ and $\alpha_1$ correspond to paths around triangles which are not translates of each others.  Lemma \ref{l:notallofthem} now implies that $Stab_{\Gamma(\mc P)}(\Phi)=\langle \alpha_{i}^{\beta^{j}\gamma^{k}}\rangle$ where $i=0,1$ and $j,k\in\ZZ$.
\begin{figure}[h!tbp]
\begin{center}
\includegraphics[height=1.75in]{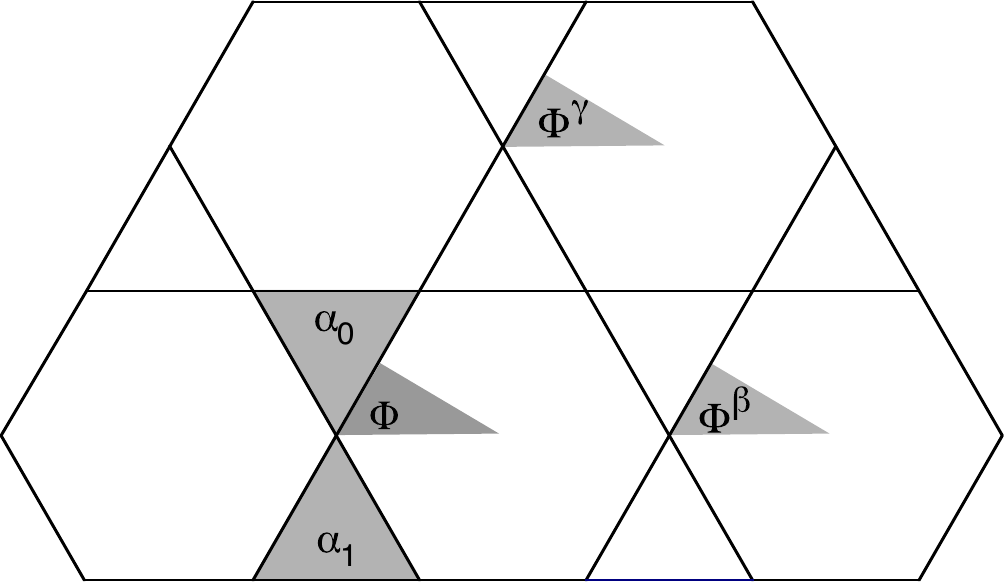}
\caption{The base flag $\Phi$, with the images under the flag action of $\Phi$ by $\beta$ and $\gamma$, as well as the faces traversed by $\alpha_{0}$ and $\alpha_{1}$ for the tiling 3.6.3.6.}
\label{f:3.6.3.6}
\end{center}
\end{figure}

\subsection*{4.8.8}
This tiling is covered by the universal tiling $\mc P=\{8,3\}$.
We choose a base flag $\Phi$ containing an edge shared by two octagons of the tiling (note that all of these lie in a single transitivity class). Let $\alpha_{0}=((ab)^{4})^{cb}, \beta=ababcbab, \gamma=cbababab$ (Figure \ref{f:4.8.8}); then $Stab_{\Gamma(\mc P)}(\Phi)=\langle \alpha_{0}^{\beta^{j}\gamma^{k}}\rangle$ where $j,k\in\ZZ$.

\begin{figure}[h!tbp]
\begin{center}
\includegraphics[height=1.75in]{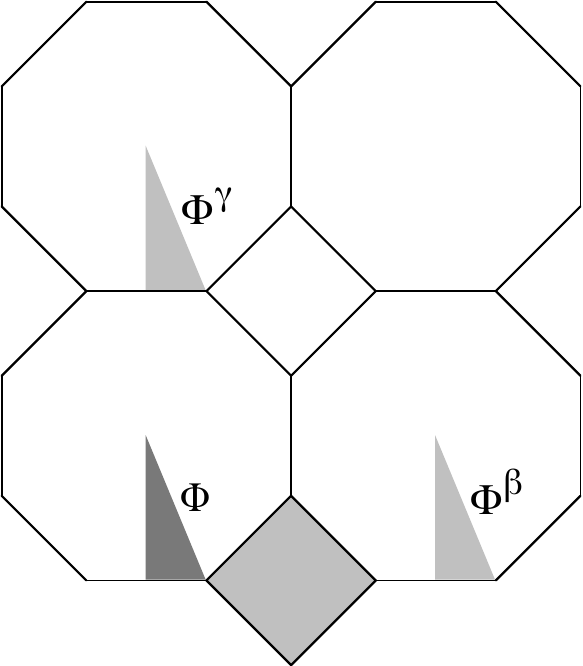}
\caption{The base flag $\Phi$, with the images under the flag action of $\Phi$ by $\beta$ and $\gamma$, as well as the face traversed by $\alpha_{0}$ for the tiling $4.8.8$.}\label{f:4.8.8}
\end{center}
\end{figure}

\subsection*{3.3.4.3.4}
This tiling is covered by the universal tiling $\mc P=\{12,5\}$.
We choose a base flag $\Phi$ containing a square such that $\Phi{cbc}$ also contains a square (note that all of these lie in a single transitivity class). Let $\alpha_{0}=(ab)^{4}, \alpha_{1}=((ab)^{3})^{c},\alpha_{2}=\alpha_{0}^{cbc},\alpha_{3}=((ab)^{{3}})^{cbcbc},\alpha_{4}=((ab)^{3}))^{cb},\alpha_{5}=((ab)^{3})^{cbac}, \beta=abcbabcbcb,$ and $\gamma=cabcbacbcbabcb$ (Figure \ref{f:3.3.4.3.4}); then $Stab_{\Gamma(\mc P)}(\Phi)=\langle \alpha_{i}^{\beta^{j}\gamma^{k}}\rangle$ where $i=0,...,5$ and $j,k\in\ZZ$.
\begin{figure}[h!tbp]
\begin{center}
\includegraphics[height=1.75in]{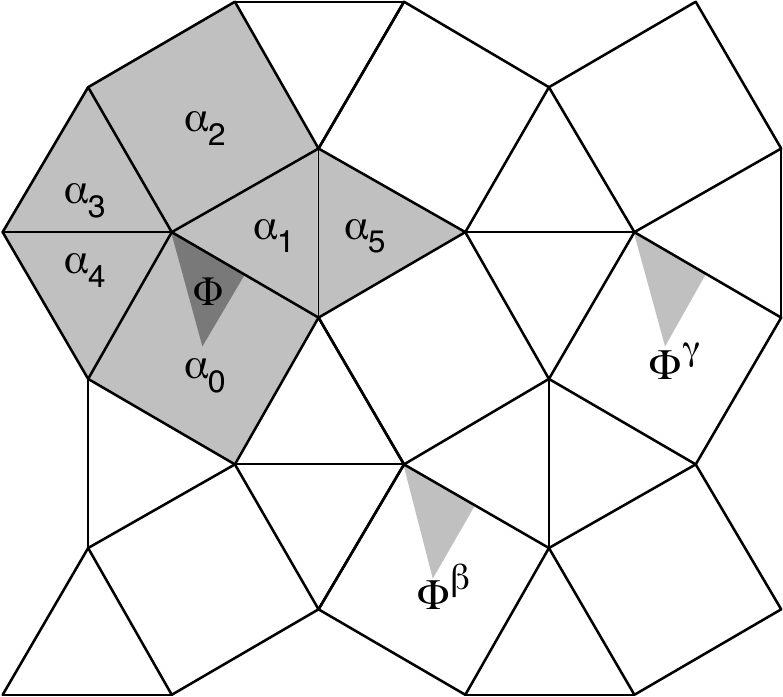}
\caption{The base flag $\Phi$, with the images under the flag action of $\Phi$ by $\beta$ and $\gamma$, as well as the faces traversed by $\alpha_{i}, i=0,...,5$,  for the tiling $3.3.4.3.4$.}\label{f:3.3.4.3.4}
\end{center}
\end{figure}

\subsection*{3.3.3.4.4}
This tiling is covered by the universal tiling $\mc P=\{12,5\}$.
We choose a base flag $\Phi$ containing an edge shared by a triangle and a square, and also containing a square of the tiling (as indicated in Figure \ref{f:3.3.3.4.4}). Let $\alpha_{0}=(ab)^{4}, \alpha_{1}=((ab)^{3})^{c},\alpha_{2}=((ab)^{3})^{cbc},\beta=abcb, \gamma=cbab(cb)^{2}ab$; then $Stab_{\Gamma(\mc P)}(\Phi)=\langle \alpha_{i}^{\beta^{j}\gamma^{k}}\rangle$ where $i=0,1,2$  and $j,k\in\ZZ$.
\begin{figure}[h!tbp]
\begin{center}
\includegraphics[height=1.75in]{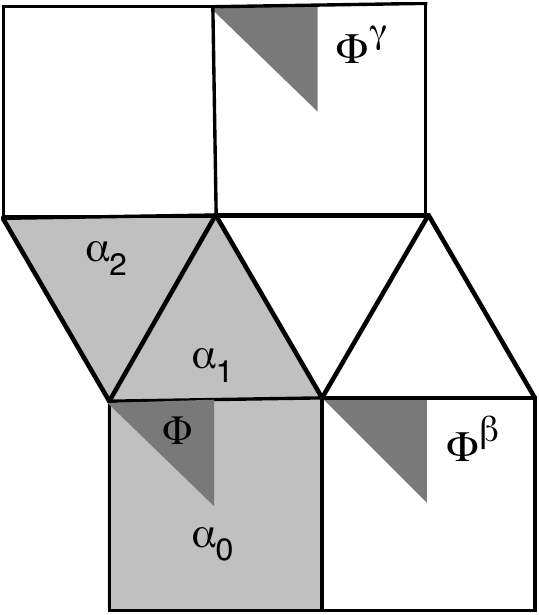}
\caption{The base flag $\Phi$, with the images under the flag action of $\Phi$ by $\beta$ and $\gamma$, as well as the faces traversed by $\alpha_{i}, i=0,...,4$, for the tiling $3.3.3.4.4$.}\label{f:3.3.3.4.4}
\end{center}
\end{figure}

\subsection*{3.4.6.4}
This tiling is covered by the universal tiling $\mc P=\{12,4\}$.
We choose a base flag $\Phi$ containing an edge shared by a triangle and a square of the tiling (note that all of these lie in a single transitivity class, see Figure \ref{f:3.4.6.4}), as well as the triangle containing that edge. Let $\alpha_{0}=(ab)^{3}, \alpha_{1}=((ab)^{4})^{cba},\alpha_{2}=((ab)^{4})^{cb},\alpha_{3}=((ab)^{4})^{c}, \alpha_{4}=((ab)^{6})^{cbc},\alpha_{5}=((ab)^{3})^{cbabc},\beta=cbabcbcbabcbab, \gamma= caba(bc)^{2}babcab $; then $Stab_{\Gamma(\mc P)}(\Phi)=\langle \alpha_{i}^{\beta^{j}\gamma^{k}}\rangle$ where $i=0,...5$ and $j,k\in\ZZ$.
\begin{figure}[h!tbp]
\begin{center}
\includegraphics[height=2.25in]{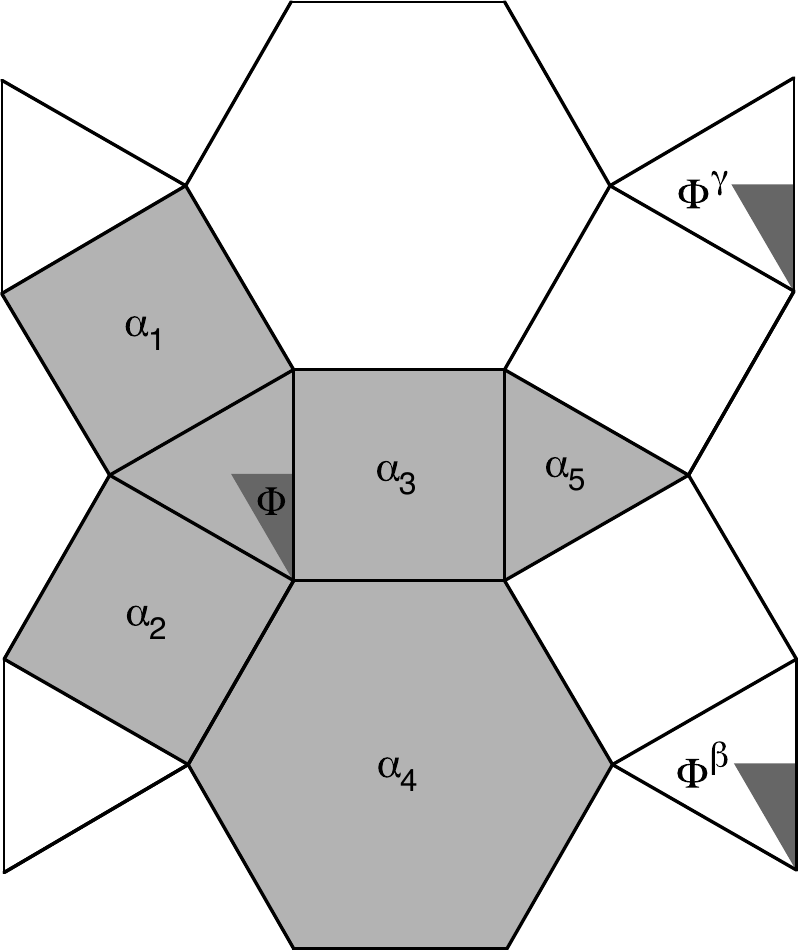}
\caption{The base flag $\Phi$, with the images under the flag action of $\Phi$ by $\beta$ and $\gamma$, as well as the faces traversed by $\alpha_{i}, i=0,\ldots,5$, for the tiling 3.4.6.4.}\label{f:3.4.6.4}
\end{center}
\end{figure}

\subsection*{3.3.3.3.6}
This tiling is covered by the universal tiling $\mc P=\{6,5\}$.
We choose a base flag $\Phi$ containing a triangle and an edge in a hexagon of the tiling (note these flags lie in two transitivity classes since there is no mirror symmetry of the tiling; see Figure \ref{f:3.3.3.3.6}). Let $\alpha_{0}=(ab)^{3}, \alpha_{1}=\alpha_{0}^{cbacbc},\alpha_{2}=\alpha_{0}^{cbc},\alpha_{3}=\alpha_{0}^{cbcb},\alpha_{4}=\alpha_{0}^{cb},\alpha_{5}=\alpha_{0}^{cba},\alpha_{6}=\alpha_{0}^{cbcba},\alpha_{7}=\alpha_{0}^{cbca},\beta=ab(cb)^{3}(abcb)^{2}cb, \gamma=ca(ba)^{2}(bc)^{2}ab$; then $Stab_{\Gamma(\mc P)}(\Phi)=\langle \alpha_{i}^{\beta^{j}\gamma^{k}}\rangle$ where $i=0,\ldots,7$ and $j,k\in\ZZ$.
\begin{figure}[h!tbp]
\begin{center}
\includegraphics[height=2.5in]{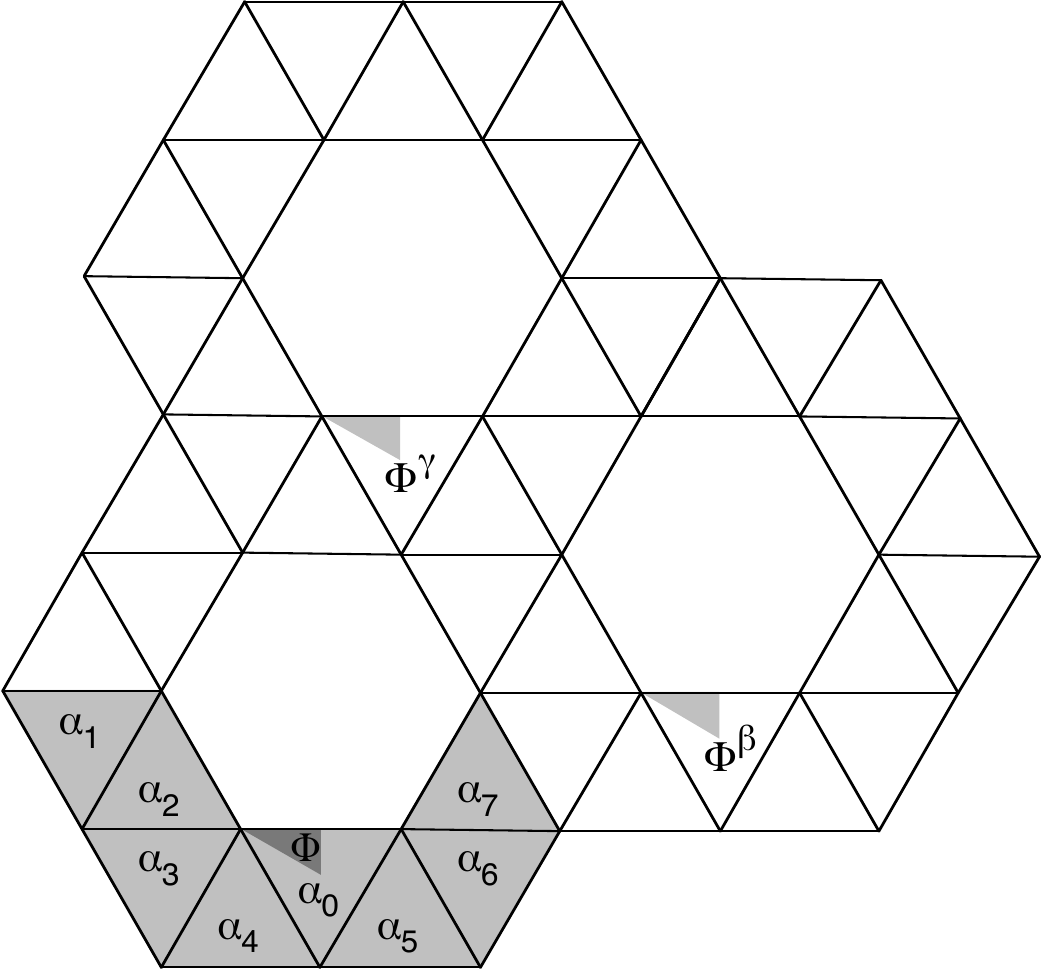}
\caption{The base flag $\Phi$, with the images under the flag action of $\Phi$ by $\beta$ and $\gamma$, as well as the faces traversed by $\alpha_{i}, i=0,...,7$, for the tiling $3.3.3.3.6$.}\label{f:3.3.3.3.6}
\end{center}
\end{figure}

\subsection*{3.12.12}
This tiling is covered by the universal tiling $\mc P=\{12,3\}$.
We choose a base flag $\Phi$ containing an edge shared by two dodecagons of the tiling (note that all of these lie in a single transitivity class). Let $\alpha_{0}=((ab)^{3})^{cb}, \alpha_{1}=((ab)^{3})^{cbabab},\beta=(bcba)^{2}(ba)^{2}, \gamma=(ba)^{2}(bcba)^{2}$ (Figure \ref{f:3.12.12}); then $Stab_{\Gamma(\mc P)}(\Phi)=\langle \alpha_{i}^{\beta^{j}\gamma^{k}}\rangle$ where $i=0,1$ and $j,k\in\ZZ$.
\begin{figure}[h!tbp]
\begin{center}
\includegraphics[height=1.75in]{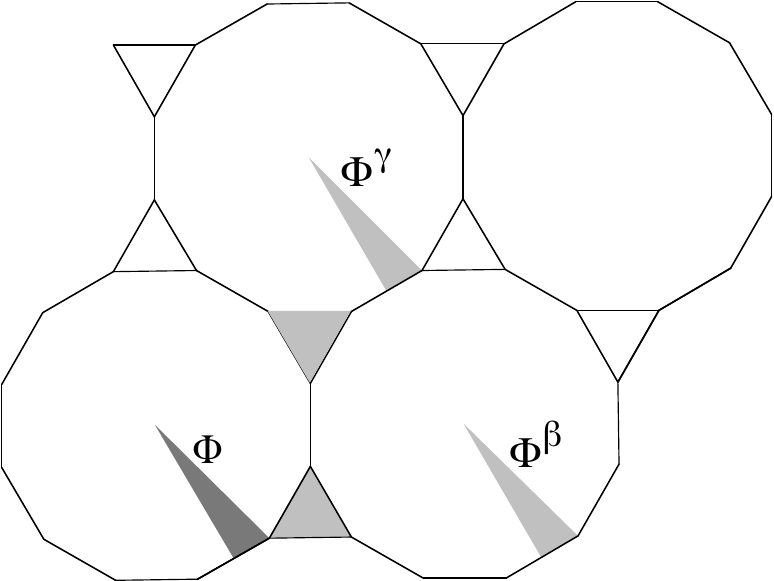}
\caption{The base flag $\Phi$, with the images under the flag action of $\Phi$ by $\beta$ and $\gamma$, as well as the faces traversed by $\alpha_{0}$ and $\alpha_{1}$ for the tiling $3.12^{2}$.}\label{f:3.12.12}
\end{center}
\end{figure}

\subsection*{4.6.12}
This tiling is also covered by the universal tiling $\mc P=\{12,3\}$.
We choose a base flag $\Phi$ containing a dodecagon and an edge of a hexagon of the tiling (note that all of these lie in a single transitivity class). Let $\alpha_{0}=((ab)^{4})^{cbabab}, \alpha_{1}=((ab)^{6})^{cbab},\alpha_{2}=((ab)^{4})^{cb},\alpha_{3}=((ab)^{6})^{c},\alpha_{4}=((ab)^{4})^{cba},\beta=(ab)^{3}(cbab)^{2}ab, \gamma=(ab)^{5}cbabcb$ (Figure \ref{f:4.6.12}); then $Stab_{\Gamma(\mc P)}(\Phi)=\langle \alpha_{i}^{\beta^{j}\gamma^{k}}\rangle$ where $i=0,\ldots,4$ and $j,k\in\ZZ$.
\begin{figure}[h!tbp]
\begin{center}
\includegraphics[height=2.5in]{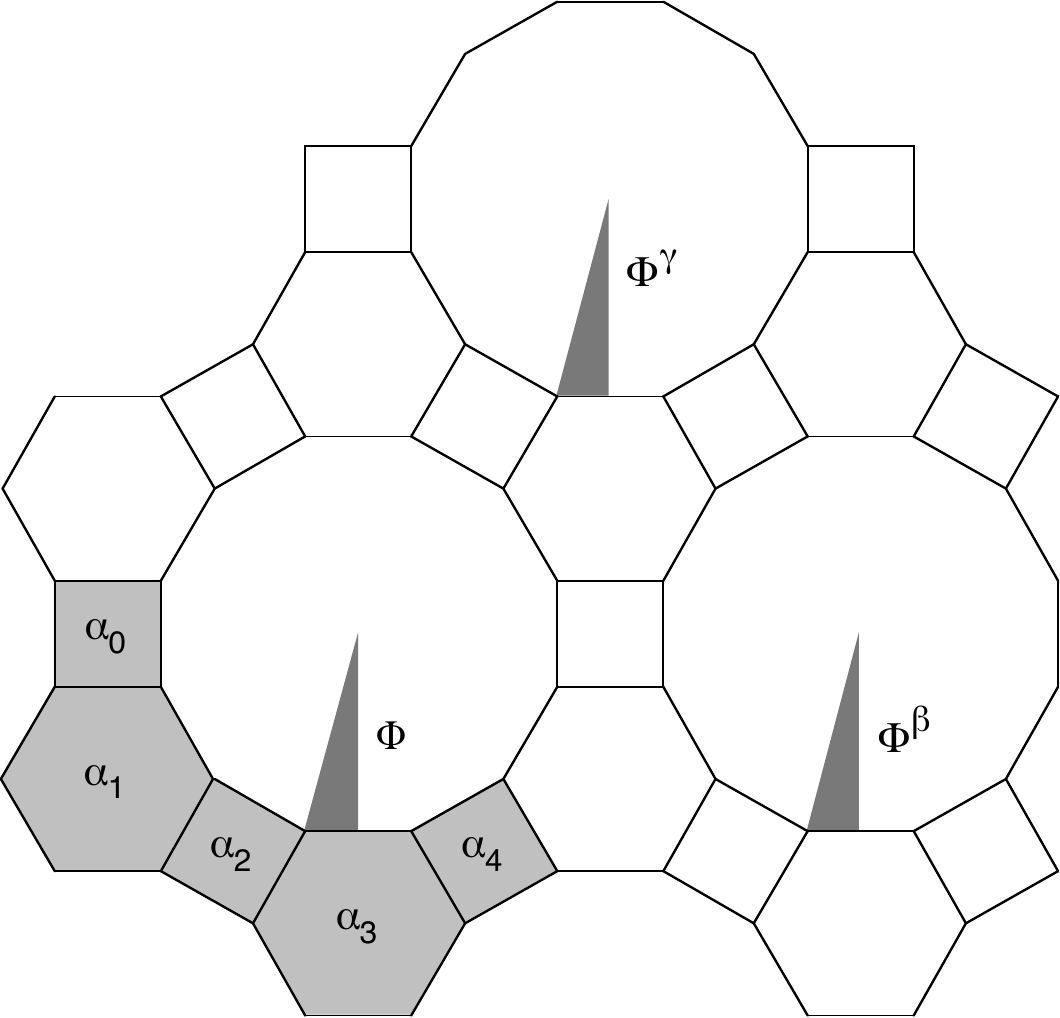}
\caption{The base flag $\Phi$, with the images under the flag action of $\Phi$ by $\beta$ and $\gamma$, as well as the faces traversed by $\alpha_{i}, i=0,...,4$, for the tiling 4.6.12.}\label{f:4.6.12}
\end{center}
\end{figure}

\section{Conclusion}
Closed walks, spanning trees and the flag graph have been previously used for different purposes related to stabilizers of flags (see for example \cite{OrbPelWei}, \cite{PisZit}). In \cite[Theorem 2F4]{McMSch02} McMullen and Schulte interpret the elements in the stabilizer of a flag of a regular polytope $\mc P$ as closed walks on the graph determined by the vertices and edges of $\mc P$ (as opposed to Theorem \ref{T:SpanningTrees}, where we use $\mc{GF}(\mc P)$ instead). This is used to determine a generating set for the stabilizer of a flag for the infinite polyhedron $\{\infty, 3\}^{(b)}$ in \cite[Section 7E]{McMSch02}.

The current work is motivated by three related goals. The first is to better understand the relationship between the geometry of classically studied polyhedra and Hartley's quotient represenation. The second is to begin to lay the groundwork for the study of new classes of non-regular polytopes. Finally, we hope to develop some of the tools necessary to utilize abstract polytopes to resolve some of the outstanding questions in the study of tilings and polyhedra. For example,  in 1981 Gr\"unbaum, Miller and Shephard posited a complete classification of generalized uniform tilings admitting the possibility of non-convex planar star polygons and apeirogons as faces \cite{GruMilShe81}. To date, no proof of the completeness of this enumeration has appeared. Is it possible to analyze such tilings from within a framework of abstract polyhedra to verify the enumeration?

------------------------------------------------------------------------------------------

\bibliographystyle{amsalpha}
\bibliography{../../ResearchBibliography}

 \end{document}